\documentclass{amsart}
\usepackage{graphicx}
\usepackage{amssymb}
\usepackage{array}
\usepackage{enumerate}
\usepackage{url}
\usepackage{algorithm}

\newtheorem{theorem}{Theorem}

\newtheorem{lemma}[theorem]{Lemma}

\numberwithin{theorem}{section}
\theoremstyle{remark}

\begin{document}

\title[A Deterministic factoring algorithm]{A deterministic algorithm for
integer factorization}
\author[G.A. Hiary]{Ghaith A. Hiary}
\thanks{Preparation of this material is partially supported by
the National Science Foundation under agreements No. 
 DMS-1406190 and by 
the Leverhulme Trust (while at the University of Bristol).}
\address{Department of Mathematics, The Ohio State University, 231 West 18th
Ave, Columbus, OH 43210.}
\email{hiaryg@gmail.com}
\subjclass[2010]{Primary 11Y05.}
\keywords{Integer factorization, algorithm, continued fraction.}

\begin{abstract}
A deterministic algorithm for factoring $n$ using $n^{1/3+o(1)}$ 
bit operations is presented. The algorithm tests 
the divisibility of $n$ by all the integers in 
a short interval at once, rather than integer by integer as in 
trial division. The algorithm is implemented.
\end{abstract}

\maketitle

\section{Introduction} \label{intro}

One can use trial division to factor an integer $n$ using 
$\le n^{1/2}$ divisions on integers of size $\le n$.
There are several algorithms that improve the running time to
$n^{1/3+o(1)}$ bit operations without using fast Fourier
transform (FFT) techniques: Lehman's method~\cite{Lehman} which ``uses
a dissection of the continuum similar to the Farey dissection,''
Lenstra's algorithm~\cite{Lenstra} which looks for divisors of $n$ in residue
classes, McKee's algorithm~\cite{McKee} which 
is related to Euler's factoring method, 
and an algorithm due to Rubinstein~\cite{Rubinstein} that relies on estimates
for Kloosterman sums. 
The Pollard-Strassen algorithm~\cite{Pollard,Strassen} 
uses an FFT precomputation  
to improve the time complexity to $n^{1/4+o(1)}$ bit operations and
requiring $n^{1/4+o(1)}$ bits of storage (memory); see \cite{BGS,CH,CP,Vas} 
for example. As far as we know, this is the fastest 
deterministic factoring algorithm with a fully proven complexity, though 
 it has the practical disadvantage of requiring much memory space.
The Coppersmith algorithm~\cite{coppersmith} for finding small roots of bivariate
rational polynomials enables factoring $n$ using $n^{1/4+o(1)}$ operations. 
This algorithm uses lattice basis reduction techniques,
and it has the advantage of requiring little memory space.
(However, the $n^{o(1)}$ factor in the running time seems significant, 
involving a high power of $\log n$.)
Shank's class group method (see~\cite{CP}) has 
a better complexity of $n^{1/5+o(1)}$ bit operations to factor $n$,
but it assumes the generalized Riemann hypothesis, which is unproved so far.

In this paper, we present a new deterministic method for
factoring $n$ in $n^{1/3+o(1)}$ time. 
Like the other exponential factoring methods mentioned before, this algorithm 
is mainly of a theoretical interest. There 
are already probabilistic methods that far outperform 
it in practice, and in heuristically subexponential time; see \cite{CP}
for a survey of such methods. Our goal, rather, is to 
present a new deterministic approach for integer
factorization that we hope can be improved in the future.

\section{Main result} 
An integer $n> 1$ is composite if the equation $n=x y$ has a non-trivial 
integer solution $(x,y)$. One can test whether $n=xy$ holds 
by testing if $n/x \equiv 0 \bmod{1}$, which can be decided on
dividing $n$ by $x$ directly, say. 
By looping through the integers $1<x\le \sqrt{n}$ this way, 
one will either find a non-trivial factor of $n$, or, if no 
such factor is found, 
one concludes that $n$ is prime. This trial division 
procedure is guaranteed to terminate after $\le \sqrt{n}$ steps. 
The new algorithm that we present, Algorithm~\ref{onethirdalg}, enables
a speed-up over trial division because it can test the equation 
$n/(x+h)\equiv 0\bmod{1}$ for many integers $h\in [-H, H]$ in basically a single step. The observation is that, locally (i.e.\ if $H$ is small enough 
compared to $x$), one can approximate 
 $n/(x+h)$ by a linear polynomial in $h$ with rational coefficients. 
The oscillations of this polynomial modulo $1$ are easy
to understand, due to linearity, which leads to the speed-up.

The main result is Theorem~\ref{onethird}, which gives an upper bound
on the complexity of Algorithm~\ref{onethirdalg}. 
The complexity is measured by
the total number of the following operations consumed:
$+,-, \times,\div,\exp,\log$. 
This in turn can be routinely bounded in terms of bit operations
since all the numbers that occur in Theorem~\ref{onethird}
can be expressed using $\ll \log n$ bits.
We will make use of some basic algorithms such as 
how to generate the continued fraction (CF) 
convergents of a rational number and 
how to solve a quadratic equation.
We will use the notation $[x]$
to denote the nearest integer to $x$ (if $x$ is half an integer, we take
$[x]=\lfloor x\rfloor$). 
\begin{algorithm}
\caption{Given an integer $n>1$, this algorithm finds 
a non-trivial factor of $n$ or proves that $n$ is prime.}\label{onethirdalg}
\begin{enumerate}
\tt
\item[1.][Initialize]\\
set $x_0=\min\{\lceil (17n)^{1/3}\rceil,\lfloor \sqrt{n}\rfloor\}$, 
$x=x_0+2$, $H=1$;\\
\vspace{1mm}
\item[2.][Trial division]\\
check if $n$ has a divisor $1< k\le x_0$, if so return $k$;\\
\vspace{1mm}
\item[3.][Loop]\\
while($x-H\le \lfloor \sqrt{n}\rfloor$) \{\\
\vspace{1mm}\noindent\leftskip=8mm
generate the CF convergents of $n/x^2$, say
$[b_0/q_0,\ldots,b_r/q_r]$,
then find the convergent with the largest $q_j\le 4H$;\\
\vspace{1mm}\noindent\leftskip=8mm
set $b=b_j$, $q=q_j$, $a=[qn/x]$;\\ 
%$\epsilon_1=qn/x-a$, 
%$c_0=x\epsilon_1$, $c_1=\epsilon_1-x\epsilon_2/q'$,
% $c_2=nq/x^2-\epsilon_2/q'$;\\
\vspace{1mm}\noindent\leftskip=8mm
solve $(qn-ax)+(bx-a)h+bh^2=0$; 
for each integer solution $h$ 
test if $x+h$ divides $n$, if so return $x+h$;\\ 
\vspace{1mm}\noindent\leftskip=8mm
increment $x\leftarrow x+2H+1$, set $H=\lfloor (17n)^{-1/3}x\rfloor$;\\
\vspace{1mm}\noindent\leftskip=3mm
\}\\
\vspace{1mm}\noindent\leftskip=3mm
return $n$ is prime;
\end{enumerate}
\end{algorithm}
\begin{theorem}\label{onethird}
Algorithm~\ref{onethirdalg} returns a non-trivial factor of $n>1$, or
proves that $n$ is prime, using $\ll n^{1/3}\log^2 n$
operations on numbers of $\ll \log n$ bits.
\end{theorem}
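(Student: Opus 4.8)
The plan is to analyze Algorithm~\ref{onethirdalg} in two parts: correctness and complexity. For correctness, I would first check that Step~2 handles all divisors up to $x_0 = \min\{\lceil(17n)^{1/3}\rceil, \lfloor\sqrt n\rfloor\}$, so it remains to show the Loop catches every divisor $d$ of $n$ with $x_0 < d \le \sqrt n$. The key identity is that if $d \mid n$ with $n = d e$, then writing $d = x + h$ for the relevant loop value $x$, one has $n/(x+h) = e$ an integer. The algorithm replaces the exact test ``$x+h \mid n$'' by the quadratic $(qn - ax) + (bx-a)h + bh^2 = 0$, which arises from clearing denominators in the approximation $n/(x+h) \approx (a - b h)/q$ coming from the continued-fraction convergent $b/q$ of $n/x^2$. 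I would show that $b/q$ being a good rational approximation to $n/x^2$ forces the quadratic to vanish \emph{exactly} at $h = d - x$ whenever $d$ is a genuine divisor in the window $|h| \le H$ — essentially because the ``error term'' in the linear approximation to $n/(x+h)$ is controlled by $|n/x^2 - b/q| \le 1/(q q_{j+1}) $ and $q_{j+1} > 4H$, and this makes the rounded quantity $a = [qn/x]$ exact enough that the integrality of $e = n/d$ pins down $h$ as a root. So the quadratic has $h = d - x$ among its (at most two) integer roots, and the divisibility test then returns $d$.

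Next I would verify the Loop actually \emph{covers} $[x_0, \sqrt n]$: the loop variable advances by $2H+1$ where $H = \lfloor (17n)^{-1/3} x\rfloor$, and one must check that consecutive windows $[x-H, x+H]$ overlap (or abut) so no integer is skipped. This is a monotonicity/size bookkeeping argument: since $H \approx (17n)^{-1/3} x$ and $x \le \sqrt n$, we have $H \le (17n)^{-1/3} n^{1/2} = n^{1/6}/17^{1/3}$, and one checks that the new $x$ after incrementing still satisfies $x - H_{\text{new}} \le$ (old $x + H_{\text{old}}$), using that $H$ is nondecreasing in $x$. The constant $17$ is presumably chosen exactly so that the local linear approximation is valid on the whole window — i.e.\ so that $H^2$ is small compared to $x$, which is what makes the ``$bh^2$'' term genuinely lower-order and the CF argument go through. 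I would make that validity condition explicit (something like $H \le c\, x/H$ forcing $H^2 \ll x$, hence $H \ll x^{1/2}$, consistent with $H \ll n^{1/6}$) and confirm $17$ works.

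For the complexity: Step~2 is trial division up to $x_0 \ll n^{1/3}$, costing $\ll n^{1/3}$ operations on $O(\log n)$-bit numbers. For the Loop, the number of iterations is $\sum$ over windows, which is $\ll \int_{x_0}^{\sqrt n} \frac{dx}{H(x)} \ll \int_{x_0}^{\sqrt n} \frac{(17n)^{1/3}}{x}\, dx \ll n^{1/3} \log n$ (the $\log n$ from integrating $dx/x$ across a range of width $\le n^{1/2}$). Each iteration does: one continued-fraction expansion of $n/x^2$ (a rational with $O(\log n)$-bit numerator and denominator), which terminates in $\ll \log n$ steps by the standard bound on CF length; selecting the convergent with largest $q_j \le 4H$; computing $a = [qn/x]$; solving one quadratic (a constant number of $\exp/\log/\times/\div$ operations); and testing at most two divisibilities. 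So each iteration is $\ll \log n$ operations, giving $\ll n^{1/3}\log^2 n$ overall, with all quantities $O(\log n)$ bits.

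The main obstacle I expect is the exactness claim in the correctness argument: showing rigorously that when $d \mid n$ lies in the current window, the chosen convergent $b/q$ (the one with largest denominator $\le 4H$) is \emph{forced} to be the one for which the cleared-denominator quadratic vanishes exactly at $h = d-x$. This requires a careful estimate relating the divisor $d = x+h$ to the continued fraction of $n/x^2$: one wants $b/q$ to approximate $n/x^2$ to within roughly $1/(q\cdot 4H)$, deduce $q_{j+1} > 4H \ge$ enough, and then show the integer $a - bh$ equals $q \cdot (n/(x+h))$ on the nose — i.e.\ that the rounding in $a = [qn/x]$ plus the discarded quadratic error is strictly less than $1$ in the relevant normalization. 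Pinning down that the window size $H$ and the threshold $4H$ on the denominator are mutually compatible (the factor $4$ and the constant $17$ doing the work) is the delicate quantitative heart of the proof; everything else is routine.
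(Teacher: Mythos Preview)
Your proposal is correct and follows essentially the same route as the paper: the paper packages your ``error $<1$'' argument as a separate lemma, writing $n/(x+h)=(a-bh)/q+\epsilon(h)/q$ and bounding $|\epsilon(h)|<\tfrac12+\tfrac14+\tfrac14$ from exactly the three sources you name (the rounding in $a$, the CF error $|\epsilon_2 H/q'|$ with $q'\ge 4H$, and the quadratic remainder), and the block count is obtained just as in your integral $\int_{x_0}^{\sqrt n}dx/H(x)\ll n^{1/3}\log n$. One small correction to your heuristic about the constant $17$: the controlling quantity is not $H^2/x$ but the cubic term $qnH^2/x^3\le 4n(H/x)^3$, and the choice $H/x\le(17n)^{-1/3}$ is made precisely so that this is $\le 4/17<1/4$.
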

The while loop in Algorithm~\ref{onethirdalg} checks for divisors of $n$ in
successive blocks of the form $[x-H,x+H]$. 
The block size, $2H+1$, increases as the loop progresses. Roughly speaking,
as the algorithm searches through an interval like $[x,2x]$, 
$H$ doubles in size, increasing from $H\approx x/(17n)^{1/3}$ at the beginning, 
to $H\approx 2x/(17n)^{1/3}$ by the end.
This choice of $H$ is not optimal, in that it can be chosen larger
depending on $a$, $b$, and $q$;
see  \textsection{\ref{implementation}}.
However, fixing the choice like we did 
 simplifies the proof of Theorem~\ref{onethird} later.

One feature of Algorithm~\ref{onethird} is that,
like the Pollard-Strassen method, it 
can be adapted to obtain partial information
about the factorization of $n$. 
For example, after small modifications, 
Algorithm~\ref{onethirdalg} can rule out factors of $n$ in 
a given interval $[z,z+w]$, $z,w\in\mathbb{Z}^+$,
using $\ll (wn^{1/3}/z+1)\log(n+z+w)$ operations. To do so, one
adjusts the trial division statement to cover
the smaller range $\tt z\le k\le \min\{x_0,z+w\}$, 
then initializes $\tt x=x_0+2$ or $\tt x=z+1$
depending on whether $z<x_0$ or not, and
adjusts the loop statement to be $\tt while(x-H\le z+w)$. 

It is interesting to compare our method with the Coppersmith algorithm.\footnote{It 
is puzzling that the Coppersmith method does not seem
to be more well-known in the relevant number theory literature.} 
Using the latter, one can factor
 $n=pq$ in poly-log time in $n$ 
if the high-order $\frac{1}{4} \log_2 N$ bits of $p$ are known.\footnote{In fact, 
the Coppersmith algorithm leads to 
the same result if the low-order bits of $p$ are known, instead of the
high-order bits. More generally, 
the algorithm can decide, in poly-log time in $n$, 
whether $p$ lies in a given
residue class modulo an integer of size about $n^{1/4}$.} (Here, 
$\log_2$ is the logarithm to base $2$.)
By comparison, our method requires more, 
the high-order $\frac{1}{3} \log_2 N$ bits of $p$. 
Therefore, our method is of a comparable strength 
to the algorithm of Rivest and Shamir~\cite{RivestShamir}, 
where this problem is set up in terms of integer programming 
in two dimensions.

\section{Proof of Theorem~\ref{onethird}}
\begin{lemma}\label{cf}
Let $n$, $x$, and $H$ be positive integers. 
Then there is a rational 
approximation of $n/x^2$ of the form
\begin{equation}\label{cfapprox}
\frac{n}{x^2} = \frac{b}{q} +\frac{\epsilon_2}{qq'},
\quad 0<q\le 4H\le q', \quad |\epsilon_2|< 1. 
\end{equation}
This approximation can be found using $\ll \log(n+x)$ operations
on integers of $\ll \log(n+x)$ bits.
\end{lemma}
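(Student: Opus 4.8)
The plan is to let $b/q$ be a suitably chosen continued fraction convergent of the rational number $n/x^2$ and to take $q'=4H$. First I would invoke the standard theory of the continued fraction of a rational: since $n/x^2\in\mathbb{Q}$, the expansion $n/x^2=[a_0;a_1,\dots,a_r]$ is finite, its convergents $p_0/q_0,\dots,p_r/q_r$ satisfy $q_0=1$ and the recurrences $p_k=a_kp_{k-1}+p_{k-2}$, $q_k=a_kq_{k-1}+q_{k-2}$ (so the $q_k$ are non-decreasing), the last convergent equals $n/x^2$ exactly, and one has the estimate $|q_j(n/x^2)-p_j|\le 1/q_{j+1}$ for every $0\le j<r$.

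Next comes the selection. Since $H\ge 1$ we have $q_0=1\le 4H$, so there is a largest index $j$ with $q_j\le 4H$; set $b=p_j$, $q=q_j$, $q'=4H$, and $\epsilon_2:=qq'(n/x^2-b/q)=q'(q(n/x^2)-b)$. The constraints $0<q\le 4H=q'$ hold by construction, so only $|\epsilon_2|<1$ needs verification. If $j=r$, then $n/x^2=b/q$ and $\epsilon_2=0$. If $j<r$, then maximality of $j$ forces $q_{j+1}>4H$, and the estimate above gives $|q(n/x^2)-b|=|q_j(n/x^2)-p_j|\le 1/q_{j+1}$, so $|\epsilon_2|=4H\,|q_j(n/x^2)-p_j|\le 4H/q_{j+1}<1$. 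This yields the approximation claimed in the lemma. (Taking $q'=q_{j+1}$ instead gives a sharper error when the next partial quotient is large, but needs a small adjustment in the single degenerate case $j=r-1$; pinning $q'=4H$ avoids that.)

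For the running time, the convergents are generated by the Euclidean algorithm on the pair $(n,x^2)$ together with the recurrences above, halting as soon as the running denominator would exceed $4H$ (or, if $4H$ already exceeds $x^2$, simply outputting the final convergent, which equals $n/x^2$). Since $x^2\le(n+x)^2$, the Euclidean algorithm terminates in $O(\log(n+x))$ steps by the Lam\'e/Fibonacci bound, each step costing $O(1)$ operations from $\{+,-,\times,\div\}$; all remainders lie below $x^2$ and the $p_k,q_k$ are bounded by a fixed power of $n+x$, so every integer handled has $O(\log(n+x))$ bits. Hence the procedure uses $O(\log(n+x))$ operations on $O(\log(n+x))$-bit integers, as required.

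I do not expect a genuine obstacle here: the lemma packages the classical ``best approximation by convergents'' fact together with the classical running time of the Euclidean algorithm. The only point calling for care is the boundary behaviour of the expansion --- guaranteeing that a qualifying convergent exists (which follows from $q_0=1$ and $H\ge 1$) and that the error estimate is not spoiled at the final step --- which is exactly why the proof fixes $q'=4H$ rather than $q'=q_{j+1}$.
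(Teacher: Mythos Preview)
Your proof is correct and follows essentially the same approach as the paper, which simply cites the classical theory of continued fractions without further detail; you have supplied the standard argument (pick the last convergent with denominator $\le 4H$ and invoke the bound $|q_j\alpha-p_j|\le 1/q_{j+1}$). Your choice to pin $q'=4H$ rather than $q'=q_{j+1}$ is a harmless variant that, as you note, sidesteps the edge case $j=r$ while still meeting the constraint $4H\le q'$ required downstream.
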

\begin{proof}
This follows routinely from the classical theory of continued fractions; 
see \cite{davenport} for example. 
\end{proof}
\begin{lemma}\label{congruence}
Let $n\ge 400$, $x$, and $H$ be positive integers
with $H/x \le (17n)^{-1/3}$. For each integer $|h|\le H$,
if $n/(x+h) \equiv 0\bmod{1}$ then $h$ 
must be a solution of the equation 
$g_{n,x}(y):=c_0+c_1 y + c_2 y^2=0$ where, letting
\begin{equation}\label{eps1}
\frac{n}{x}=\frac{a}{q}+\frac{\epsilon_1}{q},\quad a=[qn/x],
%|\epsilon_1|\le 1/2,
\end{equation}
we have $c_0 := x\epsilon_1=qn-ax$, $c_1 := \epsilon_1 - x\epsilon_2 /q'=bx-a$,
and $c_2:=nq/x^2 -\epsilon_2/q'=b$. 
(Here, $q$, $q'$, and $\epsilon_2$ are as in Lemma~\ref{cf}.)
Moreover, $g_{n,x}(y)$ does not vanish identically, so there are at most two
solutions of the equation $g_{n,x}(y)=0$.
\end{lemma}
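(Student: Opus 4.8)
The plan is to reduce the divisibility hypothesis to the assertion that a certain \emph{integer} has absolute value less than $1$, hence is $0$. First I would verify the three displayed identities for the coefficients: substituting $\epsilon_1 = qn/x - a$ gives $c_0 = x\epsilon_1 = qn - ax$; multiplying the approximation \eqref{cfapprox} through by $q$ gives $qn/x^2 = b + \epsilon_2/q'$, whence $c_2 = nq/x^2 - \epsilon_2/q' = b$ and $x\epsilon_2/q' = qn/x - bx$, so that $c_1 = \epsilon_1 - x\epsilon_2/q' = (qn/x - a) - (qn/x - bx) = bx - a$. With these coefficients a direct expansion yields the polynomial identity
\[ g_{n,x}(y) = (qn - ax) + (bx - a)y + by^2 = qn - (x+y)(a - by), \]
so that, for every real $y$ with $x + y > 0$, the equation $g_{n,x}(y) = 0$ is equivalent to $qn/(x+y) = a - by$.

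Next I would use that when $x + h$ divides $n$ with $|h| \le H$, both $qn/(x+h)$ and $a - bh$ are integers; hence it suffices to prove $\bigl| qn/(x+h) - (a - bh) \bigr| < 1$, which forces equality and gives $g_{n,x}(h) = 0$. To estimate the difference I would start from the exact identity $\frac{1}{x+h} = \frac{1}{x} - \frac{h}{x^2} + \frac{h^2}{x^2(x+h)}$, multiply by $qn$, and insert $qn/x = a + \epsilon_1$ and $qn/x^2 = b + \epsilon_2/q'$, obtaining
\[ \frac{qn}{x+h} - (a - bh) = \epsilon_1 - \frac{\epsilon_2 h}{q'} + \frac{qn h^2}{x^2(x+h)}. \]

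Then I would bound the three terms separately. Since $a = [qn/x]$ is the nearest integer, $|\epsilon_1| \le 1/2$. Since $|\epsilon_2| < 1$, $|h| \le H$, and $q' \ge 4H$ by Lemma~\ref{cf}, $|\epsilon_2 h/q'| \le 1/4$. For the last term, $q \le 4H$, $h^2 \le H^2$, and $x + h \ge x - H > 0$ (as $H \le x(17n)^{-1/3} < x$), so it is at most $\frac{4nH^3}{x^2(x-H)}$; using $H^3 \le x^3/(17n)$ this is at most $\frac{4x}{17(x-H)} = \frac{4}{17(1 - H/x)} \le \frac{4}{17(1 - (17n)^{-1/3})}$, which is strictly less than $1/4$ precisely because $(17n)^{-1/3} < 1/17$ when $n > 289$, in particular when $n \ge 400$. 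Summing gives a quantity strictly below $1/2 + 1/4 + 1/4 = 1$. For the non-vanishing: if $g_{n,x} \equiv 0$ then all coefficients vanish, so $b = 0$, then $bx - a = -a = 0$ forces $a = 0$, and then $qn - ax = qn = 0$, which is impossible since $q \ge 1$ and $n \ge 400$; thus $g_{n,x}$ is a nonzero polynomial of degree $\le 2$ and has at most two roots.

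I expect the main obstacle to be the bookkeeping of constants: the three error terms must sum to strictly less than $1$, and the margin is narrow, so the quadratic term must be squeezed below $1/4$. This is exactly where the hypotheses $H/x \le (17n)^{-1/3}$ and $n \ge 400$ enter, and the delicate point is to check that the numerology genuinely closes.
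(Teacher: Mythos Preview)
Your proposal is correct and follows essentially the same route as the paper: expand $1/(x+h)$ to second order, show the resulting error term $\epsilon(h)=qn/(x+h)-(a-bh)$ is an integer of absolute value less than~$1$, hence zero, and then observe that $(x+h)\epsilon(h)=g_{n,x}(h)$. Your factorization $g_{n,x}(y)=qn-(x+y)(a-by)$ is a tidy way to see this last identity, and your non-vanishing argument (via $b=a=0\Rightarrow qn=0$) is a harmless variant of the paper's (via $\epsilon_1=\epsilon_2=0\Rightarrow c_2=nq/x^2\neq 0$).
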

\begin{proof}
Since $x+h>0$ for $|h|\le H$, we have the identity:
$n/(x+h) = n/x-n h/x^2 +n h^2/((x+h)x^2)$.
Let us define $\epsilon(h) := \epsilon_1 -h \epsilon_2/q' + q n
h^2/((x+h)x^2)$. Then
%where $\epsilon_1$, $\epsilon_2$, $q$ and $q'$ are
%as in \eqref{eps1} and Lemma~\ref{cf}. Then
\begin{equation}
\frac{n}{x+h} = \frac{a-bh}{q} + \frac{\epsilon(h)}{q}.
\end{equation}
Multiplying both sides by $q$, we see that 
if $n/(x+h)\equiv 0\bmod{1}$ then necessarily 
$a-bh+\epsilon(h)\equiv 0 \bmod{q}$. In particular, since
$a-bh$ is an integer, so must $\epsilon(h)$; i.e.\
$\epsilon(h) \equiv 0 \bmod{1}$. By the triangle inequality, 
the bound $|h|\le H$, and the bound $H\le x/2$, we have
\begin{equation}
|\epsilon(h)|\le |\epsilon_1|+ \left|\frac{\epsilon_2 H}{q'}\right|+ 
\frac{qnH^2}{x^2(x-H)}.
\end{equation}
By construction, $|\epsilon_1|\le 1/2$, $|\epsilon_2 H/q'|< 1/4$, and $q\le 4H$. 
Since also $H\le (17n)^{-1/3}x$ by hypothesis,
we obtain that $qnH^2/(x^2(x-H))\le  4n(H/x)^3/(1-H/x) \le (4/17)/(1-6800^{-1/3})<
1/4$, where we used the assumption $n\ge 400$. 
So we deduce that $|\epsilon(h)|<1/2+1/4+1/4=1$. Therefore,
in our situation, the congruence 
$\epsilon(h)\equiv 0\bmod{1}$ is equivalent to the equation
$\epsilon(h) = 0$.
Last, since $(x+h)\epsilon(h)=g_{n,x}(h)$, and $x+h\ne 0$, 
we deduce that the equations $\epsilon(h)=0$
and $g_{n,x}(h)=0$ are equivalent.

For the second part of the lemma, note that if $g_{n,x}(y)\equiv 0$, then
$c_0=c_1=c_2=0$. Since $c_0=0$ then $\epsilon_1=0$. 
And since $c_1=0$ also, we deduce that
 $\epsilon_2=0$. But then $c_2=nq/x^2-\epsilon_2/q'=nq/x^2\ne 0$.
\end{proof}

\begin{proof}[Proof of Theorem~\ref{onethird}]
We choose integers $1<x_0<x_1\cdots$, and $1\le H_1\le H_2\le \ldots$, 
and define the following sequence of intervals: 
$B_0:=[2,x_0]$, $B_1:=[x_1-H_1,x_1+H_1]$, $B_2:=[x_2-H_2,x_2+H_2],\ldots$.
Specifically, we 
choose $x_0:=\lceil (17n)^{1/3}\rceil$, $x_1:=x_0+2$, $H_1=1$, 
and, for $j\ge 2$, we let $x_j:=x_{j-1}+2H_{j-1}+1$
where $H_j:= \lfloor (17n)^{-1/3}x_j\rfloor$. 
So $1\le H_1\le H_2\le \cdots$, and therefore $B_0\cup \cdots\cup B_j$
 covers the interval $[2,x_j]$ completely.
We use trial division to search for 
a factor of $n$ in $B_0$ using $\ll n^{1/3}$ operations.
If a factor is found, then it is returned and 
the algorithm reaches an end point. Otherwise,
we successively search for a factor in the intervals $B_j=[x_j+H_j,x_j-H_j]$. 
We note at this point that if $n< 400$ then the algorithm reaches an end
after searching $B_0$. This is because $x_0= \lceil(17n)^{1/3}\rceil \ge \lfloor
\sqrt{n}\rfloor$ for $n<400$, as can be checked by direct computation, 
and this implies that the algorithm will not enter the while loop.
So, in analyzing the Loop phase of the algorithm, we may assume that $n\ge 400$. 
Furthermore, we observe that $H_j/x_j \le (17n)^{-1/3}$ by construction, and so
 $n$, $x_j$, and $H_j$ satisfy the hypothesis
of Lemma~\ref{congruence}. Thus, applying the Lemma
to $B_j$, one can quickly locate all the divisors of $n$ in that block (if any)
using $\ll \log (n+x_j)$ operations on numbers
of $\ll \log (n+x_j)$ bits. This is mainly the cost of finding the rational
approximation in Lemma~\ref{cf} via 
the continued fraction representation of $n/x^2$, then solving
the resulting quadratic equation.
Last, we only need to search $B_j$ that satisfy $B_j \cap [2,\lfloor
\sqrt{n}\rfloor]\ne \varnothing$; i.e.\ $x_j-H_j\le  \sqrt{n}$.
This is because if 
no factor is found in these $B_j$, then one will have proved $n$ prime.
Given this, it is easy 
to show that the total number of blocks that 
need to be searched is $\ll n^{1/3} \log(2+n/x_0)$. 
Since $n\ge 2$ by hypothesis, this is $\ll n^{1/3}\log n$, 
which yields the result.
\end{proof}

\section{Implementation}\label{implementation}
We implemented Algorithm~\ref{onethirdalg} 
in \verb!Mathematica!. The implementation
is available at \url{https://people.math.osu.edu/hiary.1/factorTest.nb}.
We were able to reduce the running time by about $20\%$ by choosing the block size
asymmetrically about $x$. From the left 
we set $H_L=\lfloor (17n)^{-1/3} x\rfloor$, which is the same 
as in Algorithm~\ref{onethirdalg},
and from the right we set $H_R=\min\{H_{R,1},H_{R,2}\}$ where 
$H_{R,1}=\lfloor 0.4(1-|\epsilon_1|)q'/|\epsilon_2|\rfloor$ and
$H_{R,2}=\lfloor \sqrt{0.6(1-|\epsilon_1|)x^3/(qn))}\rfloor$. 
Also, we required that $q\le 4H_L$. 
Together, this ensured that $|\epsilon(h)|<1$ for $-H_L\le h\le H_R$, 
as needed, and it allowed a larger block size.
This is because $H_R$ will be at least the size of $H_L$, 
but it can get much larger if $|\epsilon_2/q'|$ and $q$ happened to be small; e.g.\
if $n/x^2$ can be approximated well by a rational with a small denominator. 
To take advantage of the larger block size in the implementation, 
we incremented $x\leftarrow x + H_L+H_R+1$ instead of $x\leftarrow x+2H_L+1$.

Our implementation
of Lemma~\ref{cf} became faster, on average, than trial division when $H_L\gtrsim 50$.
So we used trial division in the interval $[2,\lceil 50 (17n)^{1/3}]\rceil]$.
The running time of the full algorithm started to 
beat trial division when $n\gtrsim 10^{14}$,
with $n$ a product of two primes of roughly equal size.
The algorithm is about two times faster than 
trial division when $n\approx 10^{18}$.
This running time can be expected to improve 
using a more careful implementation; e.g.\ one need not generate all the
continued fraction convergents of $n/x^2$, as done now, 
but only the convergents with denominator $\le 4H_L$.

\bibliographystyle{amsplain}
\bibliography{factor}
\end{document}